\documentclass[letterpaper, 10 pt, conference]{ieeeconf}
\IEEEoverridecommandlockouts

\usepackage{cite}
\usepackage{float}
\usepackage{amsmath,amssymb,amsfonts}
\usepackage{algorithmic}
\usepackage{mathrsfs}
\usepackage{graphicx}
\usepackage{textcomp}
\usepackage{xcolor}
\usepackage[amsmath,thmmarks]{ntheorem}
\usepackage{hyperref}
\usepackage{enumerate}
\usepackage{nccmath}
\usepackage[normalem]{ulem}
\hypersetup{pdfborder={0 0 0},}
\def\BibTeX{{\rm B\kern-.05em{\sc i\kern-.025em b}\kern-.08em
    T\kern-.1667em\lower.7ex\hbox{E}\kern-.125emX}}
\usepackage{algorithm}

 \theoremheaderfont{\normalfont\bfseries} 
\theorembodyfont{\normalfont\itshape}    
\theoremseparator{.}                     

\newtheorem{remark}{Remark}
\newtheorem{theorem}{Theorem}
\newtheorem{lemma}{Lemma}

\newtheorem{assumption}{Assumption}

\newcommand{\ADDBoYu}[1]{\textcolor{black}{{#1}}}
\newcommand{\ADDBoyu}[1]{\textcolor{black}{{#1}}}

\begin{document}

\title{\LARGE \bf Decentralized Linearized Consensus ADMM with Efficient Quantized Communication
}

\author{Boyu Han, Xu Du,  Karl~H.~Johansson, and Apostolos I. Rikos$^*$
\thanks{$^*$Corresponding author.}
	\thanks{Boyu Han, Xu Du and Apostolos I. Rikos are with the Artificial Intelligence Thrust of the Information Hub, The Hong Kong University of Science and Technology (Guangzhou), Guangzhou, China. 
    Apostolos I. Rikos is also affiliated with the Department of Computer Science and Engineering, The Hong Kong University of Science and Technology, Clear Water Bay, Hong Kong, China. E-mails: {\tt~\{boyuhan, michaelxudu, apostolosr\}@hkust-gz.edu.cn}. 
            }
            \thanks{Karl H.~Johansson is with the Division of Decision and Control Systems, KTH Royal Institute of Technology, SE-100 44 Stockholm, Sweden. 
    He is also affiliated with Digital Futures, SE-100 44 Stockholm, Sweden. 
    E-mail:{\tt~kallej@kth.se}.
            }
            \thanks{The work of B.H., X.D. and A.I.R. was supported by the Guangzhou-HKUST(GZ) Joint Funding Scheme (Grant No. 2025A03J3960). The work of A.I.R. was also supported by the Guangdong Provincial Project (Grant No. 2024QN11G109).} 
}

\maketitle

\begin{abstract}

Distributed optimization offers significant advantages over centralized methods in terms of scalability and robustness when solving large-scale problems.
In this paper, we propose a novel decentralized optimization algorithm that integrates Inexact Consensus ADMM (IC-ADMM) with a finite-time decentralized quantized communication algorithm.
The proposed method enjoys three main benefits: (i) it operates on directed communication graphs, (ii) it requires only quantized local information instead of exact values, and (iii) it does not rely on solving the local subproblems exactly.
Under the assumption that each node’s local objective is $\mu$-strongly convex and $L$-smooth, the algorithm is guaranteed to achieve global linear convergence to a neighborhood of the optimal solution.
Extensive numerical experiments demonstrate its advantages over existing methods in terms of convergence speed and communication efficiency.

\end{abstract}

\section{Introduction}
Distributed optimization has found broad applications in machine learning \cite{zhou2026preconditioned}, wireless communication \cite{zhang2025enabling}, model predictive control \cite{camacho2026introduction}, and power networks \cite{lanza2025distributed}.
As these applications grow in scale and complexity, centralized computation often encounters limitations in hardware resources, memory capacity, and computational efficiency \cite{2024_doostmohammadian_rikos_Johansson_survey}.
Moreover, centralized architectures are vulnerable to single points of failure \cite{dist_structure3, dist_structure4, dist_structure5}.
By contrast, distributed optimization enables multiple agents to collaboratively solve large-scale problems through local computation and information exchange over communication networks, thereby enhancing scalability, efficiency and robustness.

Distributed optimization algorithms are commonly classified into two main categories \cite{boyd2007notes}:
(i) primal decomposition and (ii) dual decomposition.
In both frameworks, each node updates its local decision variables while exchanging information with neighboring nodes or a central coordinator.
In primal decomposition approaches, the exchanged information typically consists of local primal variables, (sub)gradients, or second-order information such as Hessian approximations.
In contrast, dual decomposition methods additionally introduce dual variables associated with coupling constraints, which are updated and exchanged across the network.
In this work, we focus on dual decomposition-based methods, particularly the Alternating Direction Method of Multipliers (ADMM) \cite{boyd2011distributed}, which has attracted significant attention due to its effectiveness and wide applicability in large-scale distributed optimization problems.

\noindent
\textbf{Existing Literature.}   ADMM can be applied to solve several classes of distributed optimization problems, including
(i) distributed resource allocation and (ii) distributed consensus optimization \cite{boyd2011distributed}.
This paper focuses on the latter class.
The ADMM iteration typically consists of three main steps:
(i) local primal variable updates,
(ii) information exchange among nodes, and
(iii) dual variable updates.
In the first step, the local subproblems can be solved either exactly or inexactly to reduce the computational burden per iteration. 
For distributed resource allocation problems, linearized or inexact  ADMM schemes were  proposed in \cite{ng2011inexact}, \cite{he2020optimally}, \cite{chen2017efficient} and \cite{jang2026aliaadaptivelinearizedadmm}.
The global linear convergence of linearized ADMM for resource allocation problems was later established in \cite{cao2018distributed}.
For distributed consensus problems, the works \cite{ling2015dlm}, \cite{mokhtari2016dqm}, and \cite{li2019communication} established global linear convergence of Inexact Consensus ADMM (IC-ADMM) under the assumptions of strong convexity and Lipschitz continuous gradients.
Notably, these approaches employ a fully decentralized architecture that eliminates the central coordinator, offering greater structural robustness in distributed networks.
From a different perspective, \cite{11373371} analyzed the convergence behavior of IC-ADMM in terms of regret bounds.
More recently, \cite{yue2025differentially} extended the convergence analysis of decentralized IC-ADMM to the smooth nonconvex setting.
However, these methods exhibit two limitations.
First, they require information exchange over undirected communication graphs, which restricts their applicability in networks with asymmetric communication links.
Second, the exchanged information typically contains the exact value of local variables, resulting in significant communication overhead.
To address the first limitation, \cite{jiang2021distributed} proposed the Decentralized-ADMM-FTERC (D-ADMM-FTERC) algorithm, which extends Consensus ADMM (C-ADMM) to convex optimization over directed graphs.
Nevertheless, its convergence guarantees hold only when the local subproblems are solved exactly.
Furthermore, the algorithm still requires the exchange of exact local variables, and thus does not reduce the communication cost. To address the second limitation, \cite{xu2023qc} introduced decentralized quantization technique for information exchange among nodes.
However, their method still relies on the assumption of undirected communication graphs.
To the best of our knowledge, within the C-ADMM framework, only \cite{rikos2023asynchronous} and \cite{du2025cdc} address all limitations simultaneously, i.e., communication over directed graphs, quantized information exchange and decentralized fashion.
However, their convergence guarantees rely on the exact solution of the local subproblems.
This indicates that IC-ADMM algorithms with quantized communication over directed graphs remain largely unexplored.

\noindent
\textbf{Contribution.} Motivated by the aforementioned literature, we propose a novel \emph{decentralized} algorithm that integrates IC-ADMM over directed communication graphs while relying solely on quantized local information for inter-node communication.
To the best of our knowledge, this is the first work that enables IC-ADMM to operate under the following requirements simultaneously:
(i) directed communication graphs,
(ii) quantized information exchange, and
(iii) fully decentralized computation.
The main contributions of this paper are summarized as follows.

\noindent
\textbf{A.} Inspired by \cite{rikos2025distributed} and \cite{du2025cdc}, we propose a two-layer decentralized algorithm, termed Decentralized Quantized Linearized Consensus ADMM (DQLCA), which consists of an optimization layer and a communication layer.
The communication layer, as in \cite{rikos2022non}, exchanges information over directed graphs using only quantized local variables.
The outer optimization layer updates the local decision variables inexactly following the IC-ADMM scheme and simultaneously updates the corresponding local dual variables. 
We show that our proposed algorithm achieves global linear convergence to a neighborhood of the optimal solution under the assumptions that each node’s local cost function is $\mu$-strongly convex and $L$-smooth.
\\
\textbf{B.} In extensive numerical experiments we demonstrate its advantages over existing methods in terms of computation time and communication efficiency.

\section{Notation and Background}

In this paper, $\mathbb{R}$, $\mathbb{Q}$, and $\mathbb{Z}$ denote the sets of real, rational, and integer numbers, respectively. Matrices are represented by capital letters (e.g., $A\in \mathbb{R}^{n \times n}$), and vectors by lowercase letters (e.g., $a \in \mathbb{R}^n$). The symbols $A^\top$ and $a^\top$ denote the transpose of a matrix $A$ and a vector $a$, respectively. For a matrix $A$, the entry in the $i$-th row and $j$-th column is denoted by $a_{ij}$. 
\ADDBoYu{For any real vector $a \in \mathbb{R}^n$, let $\lfloor a \rfloor \in \mathbb{Z}^n$ and $\lceil a \rceil \in \mathbb{Z}^n$ denote the component-wise floor and ceiling of $a$, respectively; that is, $\lfloor a \rfloor \leq a \leq \lceil a \rceil$ holds component-wise.}
The symbol $\|a\|$ denotes the Euclidean norm of a vector $a$, while $\|a\|_\infty$ denotes the infinity norm.  Furthermore, $|\mathcal S|$ denotes the cardinality of a countable set $\mathcal S$.

\subsection{Graph-Theoretic Notions} 
In this paper, $\mathcal{V} = \{1,2,\dots,N\}$ denotes the set of nodes, and $\mathcal{E} \subseteq \mathcal{V} \times \mathcal{V} \cup \{(i,i) \mid i \in \mathcal{V}\}$ represents the set of edges, where $(i,j) \in \mathcal{E}$ indicates that node $i$ can receive information from node $j$. The communication network is modeled as a directed graph (digraph) $\mathcal{G} = (\mathcal{V}, \mathcal{E})$, with $N = |\mathcal{V}| \geq 2$ being the number of nodes. For each node $i \in \mathcal{V}$, the set of out-neighbors is defined as $\mathcal{N}_i^+ = \{ l \in \mathcal{V} \mid (l, i) \in \mathcal{E} \}$, and the set of in-neighbors as $\mathcal{N}_i^- = \{ j \in \mathcal{V} \mid (i,j) \in \mathcal{E} \}$. The out-degree of node $i$ is denoted by $\mathcal{D}_i^+ = |\mathcal{N}_i^+|$, and its in-degree by $\mathcal{D}_i^- = |\mathcal{N}_i^-|$. The diameter $D$ of the digraph $\mathcal{G}$ is the length of the longest shortest directed path between any two nodes $i, j \in \mathcal{V}$. The digraph $\mathcal{G}$ is said to be strongly connected if there exists a directed path from every node $i$ to every node $j \in \mathcal{V}$.


\subsection{Quantizers}\label{quantizers_subsec}

In communication systems, quantization techniques serve to improve communication efficiency and reduce the required bandwidth. 
\ADDBoYu{By decreasing the number of bits necessary to represent information, they bridge the gap between ideal theoretical optimization and the physical realities of digital networks.}
\ADDBoYu{This capability is critical for preventing channel congestion and ensuring that decentralized algorithms remain practically implementable even under severe capacity constraints \cite{rikos2025distributed}.} 
In this paper, we employ an asymmetric mid-rise quantizer with infinite range, defined as
\begin{equation}\label{eq: quantizer}
q_{\Delta}^a(b) =  \left\lfloor \frac{b}{\Delta} \right\rfloor \in \mathbb Q^n,
\end{equation}
for a vector $b\in \mathbb R^n$. Here, the superscript $a$ denotes the asymmetric type, and $\Delta \in \mathbb{R}_+$ denotes the quantization level. 
The output is obtained by scaling the floor operation by $\Delta$.
 


\section{Problem Formulation}



\subsection{Distributed Consensus Optimization Problem}


Let us consider a distributed network modeled as a digraph $\mathcal{G} = (\mathcal{V}, \mathcal{E})$ comprising $N=|\mathcal{V}|$ nodes. Each node $i \in \mathcal{V}$ is endowed with a local cost function $f_i: \mathbb{R}^n\to\mathbb{R}$, which is exclusively known to node $i$.
A fundamental problem in this distributed setting is consensus problem, where nodes in the network cooperate to solve a global optimization task while requiring all nodes to reach a unanimous agreement. The standard formulation of this problem, which inherently relies on a central coordinator, is given by:

\begin{equation}\label{eq: consensus problem}
    \begin{aligned}
        \min_{x_i,z\in \mathbb R^n} & \quad \sum_{i=1}^{N} f_i(x_i)\\
        \text{s.t.}\;\; & \;\quad x_i =z, \quad\forall i = 1, ..., N,
    \end{aligned}
\end{equation}
where $x_i\in \mathbb R^n$ is the local variable for node $i$ and $z\in \mathbb R^n$ is the global consensus variable. Notably, the constraint $x_i=z$ still implicitly requires a central coordinator, motivating the need for a fully decentralized network model.
\subsection{Decentralized Quantized Consensus Optimization Problem over Network}
In realistic distributed systems, such as sensor networks or multi-agent robotics, there is no central coordinator. Moreover, these physical nodes communicate solely over bandwidth-limited channels, meaning local variables cannot be transmitted with infinite precision. Consequently, while the local objective functions $f_i$ are optimized over a continuous domain, any information exchanged between a node $i$ and its neighbor $j$ is restricted. To address this, we quantize the transmitted messages using a quantization level $\Delta$. Accordingly, the decentralized quantized consensus problem is formulated as:
\begin{equation}\label{eq: quantized problem}
    \begin{aligned}
        \quad\qquad\min_{x_i,z\in\mathbb R^n} & \quad \sum_{i=1}^{N} f_i(x_i)\\
        \text{s.t.}\;\; & \;\quad x_i =z, \quad\forall i = 1, ..., N \\
        &\;\quad \text{nodes communicate with quantized values,}
    \end{aligned}
\end{equation}
where the quantized values are based on the quantization level $\Delta$. Also, let $z^*$ denotes the optimal solution to the problem \eqref{eq: quantized problem}. While the presence of the global consensus variable $z$ in \eqref{eq: quantized problem} ostensibly suggests a centralized architecture, our objective is to solve this problem in a \ADDBoYu{fully} decentralized manner. Rather than relying on a central fusion center to enforce $x_i = z$, our proposed algorithmic framework in the next section will achieve this global agreement through purely local, peer-to-peer exchanges over the digraph $\mathcal{G}$. Consequently, the central variable $z$ serves only as an analytical anchor for our formulation, while the actual computation and quantized communication are executed entirely at the node level.  

\subsection*{Assumptions}
Before proceeding to the algorithmic development, we establish the following fundamental assumptions regarding the network topology and the local cost functions.
\begin{assumption}\label{ass:3}
  
    The communication network is modeled as a \textit{strongly connected} digraph $\mathcal{G} = (\mathcal{V}, \mathcal{E})$. 
    Also, every node $i$ knows the diameter of the network $D$, and a common quantization level $\Delta$. 
\end{assumption}

\begin{assumption}\label{ass:1} 
The local cost function $f_i$ of each node $i \in \mathcal{V}$ is closed, proper, $L$-smooth and $\mu$-strongly convex. 
Specifically, for each local cost function $f_i$, for every $x_\alpha,x_\beta \in \mathbb R^n$, there exists a \emph{strong-convexity} constant $\mu_i>0$ and a \emph{Lipschitz-continuity} constant $L_i>0$ such that 
\begin{equation}\label{eq: mu}
\begin{aligned}
    f_i(x_\alpha)+&\nabla f_i(x_\alpha)^\top (x_\beta-x_\alpha) +\frac{\mu_i}{2}\|x_\beta-x_\alpha\|^2\leq f_i(x_\beta),
\end{aligned}
\end{equation}
and
\begin{equation}\label{eq: lip}
\begin{aligned}  
     \left\|\nabla f_i(x_\alpha)-\nabla f_i(x_\beta) \right\| \leq L_i \left\|x_\alpha-x_\beta\right\|,
\end{aligned}
\end{equation}
hold.
\end{assumption}




The assumptions introduced above lay the groundwork for the subsequent global convergence analysis.
First, the strongly connected digraph defined in Assumption \ref{ass:3} facilitates the achievement of finite-time quantized consensus across the network. \ADDBoyu{Moreover, the diameter $D$ of the digraph can be computed by each node in a distributed and finite-time fashion via the protocol proposed in \cite{oliva2016distributed}.} Second, the $\mu$-strong convexity defined in \eqref{eq: mu} and $L$-smooth property in \eqref{eq: lip} guarantee the existence of a unique global optimal solution for problem \eqref{eq: quantized problem}. Third, the $L$-smoothness provided in \eqref{eq: lip} will be used in Lemma~\ref{lemma:2} to bound the error term in \eqref{error:3}. It is worth noting that, Assumption \ref{ass:1} represents standard, widely adopted conditions in the design of decentralized optimization algorithms \cite{ling2015dlm}. Together, these assumptions formulate the rigorous mathematical foundation required to establish the global convergence analysis of Algorithm \ref{alg: DQLCA}, see Theorem~\ref{thm: convergence}.


\section{Overview of Linearized Consensus ADMM}
\subsection{Exact Consensus ADMM}

The exact C-ADMM offers a robust theoretical framework in \cite[Chapter~7]{boyd2011distributed}. 
We first construct the Augmented Lagrangian for \eqref{eq: consensus problem}, which is given by:
\begin{equation}\label{eq: augmented lagrangian}
    \mathcal{L}_{\rho}(x, z, \lambda) \overset{\text{def}}{=} \sum_{i=1}^{N} \left( f_{i}(x_{i}) + \lambda_{i}^{\top} (x_{i} - z) + \frac{\rho}{2} \|x_{i} - z\|^{2} \right),
\end{equation}
where $\lambda_i \in \mathbb R^{n}$ is the Lagrangian multiplier associated with node $i$, the concatenated dual variable is $\lambda = [\lambda_1^{\top}, \lambda_2^{\top}, ..., \lambda_N^{\top}]^{\top},$ the concatenated primal variable is $x=[x_1^{\top}, x_2^{\top}, ..., x_N^{\top}]^{\top}$ and $\rho$ is the positive penalty parameter. 
The exact C-ADMM (see \cite{boyd2011distributed}) iteratively updates the primal variables, the consensus variable, and the dual variables through the following three-step process at each iteration $k$:
\begin{subequations}\label{eq: steps of standard ADMM}
\begin{align}
    x_{i}^{[k+1]} &= \underset{x_{i}}{\arg\min}\; \mathcal{L}_{\rho}\left(x_{i}, z^{[k]}, \lambda_{i}^{[k]}\right) \label{eq:step1} \\
    z^{[k+1]}     &= \underset{z}{\arg\min}\; \mathcal{L}_{\rho}\left(\mathbf{x}^{[k+1]}, z, \lambda^{[k]}\right) \label{eq:step2} \\
    \lambda_{i}^{[k+1]} &= \lambda_{i}^{[k]} + \rho \left( x_{i}^{[k+1]} - z^{[k+1]} \right) \label{eq:step3}
\end{align}
\end{subequations}

\subsection{Inexact Consensus ADMM}

While this standard framework guarantees convergence, solving the exact primal update in \eqref{eq:step1} can still impose a considerable computational burden at each node. To alleviate this, linearized C-ADMM is proposed in \cite{hong2016convergence,zhou2023federated}. Instead of minimizing $f_i$ directly, each local cost function is approximated by its first-order Taylor expansion:
\begin{equation}\label{eq: first-order taylor expansion}
f_{i}(\zeta) \overset{\text{def}}{=} f_{i}(\xi) + \nabla f_{i}(\xi)^{\top}(\zeta - \xi)+\mathcal O\left(\left\|\zeta-\xi\right\|^2\right),\; \forall i,
\end{equation}
where $\zeta,\xi \in \mathbb R^n$. 
Substituting the linear approximation \eqref{eq: first-order taylor expansion} of $f_i(x_i)$ into the first step \eqref{eq:step1} of exact  C-ADMM yields the computationally efficient, closed-form $x$-update process of linearized C-ADMM, with \eqref{eq:step2} derived to its closed form \cite[Chapter~7]{boyd2011distributed} and \eqref{eq:step3} unchanged:

\begin{subequations}\label{eq: steps of linearized ADMM closed-form}
\begin{align}
    x_{i}^{[k+1]} &= z^{[k]} - \frac{1}{\rho} \left( \nabla f_i\left(z^{[k]}\right) + \lambda_i^{[k]} \right), \label{eq:step1-closed} \\
    z^{[k+1]} &= \frac{1}{N} \sum_{i=1}^{N} \left( x_i^{[k+1]} + \frac{1}{\rho}\lambda_i^{[k]} \right), \label{eq:step2-closed} \\
    \lambda_i^{[k+1]} &= \lambda_i^{[k]} + \rho \left( x_i^{[k+1]} - z^{[k+1]} \right). \label{eq:step3-closed}
\end{align}
\end{subequations}
Compared to \eqref{eq: steps of standard ADMM}, only the local primal update \eqref{eq:step1-closed} is fundamentally modified through linearization. \ADDBoYu{In exact C-ADMM, updating the primal variable often requires solving a complex optimization subproblem at each node, which typically necessitates running computationally expensive solvers. By linearizing the local cost function, we transform this optimization subproblem into a simple, closed-form equation. Consequently, each node simply evaluates a direct algebraic formula at each iteration, completely eliminating the need for complex solvers and significantly decreasing the computational burden.}

It is noticeable that \eqref{eq: steps of linearized ADMM closed-form} still requires the global consensus variable $z$ in \eqref{eq:step2-closed}, which can not be solved in a decentralized manner. Inspired by this limitation, we will develop a fully decentralized algorithm in the subsequent section. 
Our algorithm is capable of (i) enabling each node to update its local variable only communicating with its immediate neighbors (no more need for global variable or central coordinator), and (ii) achieving efficient communication within the directed network. 


\section{Decentralized Linearized Consensus ADMM with efficient Communication}

In this section, we propose a novel algorithm, termed DQLCA, designed to solve \ADDBoYu{decentralized} optimization problems over strongly connected digraphs \ADDBoyu{consisting} of bandwidth-limited communication links. 

\subsection{DQLCA Algorithm Development}\label{subsec:Algorithm Development} 
To formulate a \ADDBoYu{fully} decentralized algorithm, it is necessary to eliminate the reliance on the centralized consensus variable $z$ presented in \eqref{eq: steps of linearized ADMM closed-form}. We achieve this by assigning each node $i \in \mathcal{V}$ a local tracking variable, denoted as $\hat z_i$, which serves as the node's local estimate of global state. 
A local primal variable $\hat x_i$ and a local dual variable $\hat\lambda_i$ are introduced for each node $i$, aiming to distinguish the quantized inexact iterates in Algorithm \ref{alg: DQLCA} from the exact variables in \eqref{eq: steps of linearized ADMM closed-form}.
Following this, we present our proposed algorithm (detailed below as Algorithm~\ref{alg: DQLCA}).

\begin{algorithm}[h] 
	\caption{DQLCA: Decentralized Quantized Linearized Consensus ADMM}
   	\textbf{Input:} A strongly connected digraph $\mathcal{G}$ with $N = |\mathcal{V}|$ nodes and $m = |\mathcal{E}|$ edges. For every node $i \in \mathcal{V}$: digraph diameter $D$, local cost function $f_i$, quantization level $\Delta \in \mathbb{Q}$ and Augmented Lagrangian parameter $\rho$. Assumptions \ref{ass:3}, \ref{ass:1} hold.
    
    \textbf{Initialization:} Randomly chosen $\hat x_i^{[0]}\in \mathbb{R}^n$, $\hat \lambda_i^{[0]} \in \mathbb{R}^n$ that satisfies $\sum_{i=1}^N \hat \lambda_i^{[0]}=0 $ and $\hat z_i^{[0]} \in \mathbb{R}^n$, for each node $i \in \mathcal{V}$. 
    \\
    \textbf{Iteration:} 
	For $k = 0,1,2,\ldots$, each node $i \in \mathcal{V}$ does:
	\begin{enumerate}    
	\item  Optimize $\hat x_i$ as 
    \begin{equation}\label{eq: new local primal}
    \begin{split}
        \hat x_{i}^{[k+1]} &= \hat z_i^{[k]} - \frac{1}{\rho} \left( \nabla f_i\left(\hat z_i^{[k]}\right) + \hat\lambda_i^{[k]} \right).
    \end{split}
    \end{equation}
    \item Updates the local tracking variable as
    \begin{equation}\label{eq: global estimation}
                \hat z_i^{[k+1]} = \text{Algorithm \ref{alg:QuAS}}\left(\left(\hat x_i^{[k+1]}+ \frac{\hat\lambda_i^{[k]}}{\rho}\right),\;D,\;\Delta\right).
    \end{equation}
    \item Update the dual variable as: \begin{equation}\label{eq: dual update 1}
        \hat\lambda_i^{[k+1]} = \hat\lambda_i^{[k]} + \rho \left(\hat x_i^{[k+1]} -  \hat z_i^{[k+1]}\right).
    \end{equation}
	\end{enumerate}
    \textbf{Output.} Each node $i \in \mathcal{V}$ calculates $z^*$ which solves problem \eqref{eq: quantized problem}. 
	\label{alg: DQLCA}
\end{algorithm}

Our algorithm contains three primary steps.   
In the first step, each node locally updates its primal variable $\hat x_i$ via a computationally efficient closed-form expression \eqref{eq: new local primal}. 
This expression is derived by minimizing the Augmented Lagrangian, wherein the local cost function $f_i$ is replaced by its first-order Taylor expansion evaluated at the current local tracking variable $\hat z_i^{[k]}$. Subsequently, in the second step, all nodes collaboratively update their local tracking variables $\hat z_i$ through the decentralized quantized consensus protocol defined in Algorithm \ref{alg:QuAS}, (see \eqref{eq: global estimation}). 
Finally, in the third step, the dual variable of each node is updated using the most recent local primal and tracking variables. 
The algorithm continuously alternates through these three steps until convergence is achieved, ultimately yielding the optimal solution.


Algorithm~\ref{alg:QuAS} follows a structural design analogous to \cite[Algorithm~$1$]{rikos2022non}, consisting of three core components: quantization, averaging and a stopping criterion. In the initialization stage, each node $i$ first quantizes its local information $\scriptstyle\left(x_{i}^{[k+1]}+\lambda_{i}^{[k]}/\rho\right)$ into the quantized value $\chi_i$, which is then split into $\xi_i$ pieces, where a subset of the pieces may have a value exactly one greater than the rest, and the node retains the smallest piece. Subsequently, the node transmits the remaining $(\xi_i-1)$ pieces to randomly selected out-neighbors or itself, while currently receiving pieces $c_j$ from each in-neighbor $j$ to update $\chi_i$ and $\xi_i$ in accordance with equation \eqref{eq: local information avg}. The algorithm additionally performs max-consensus and min-consensus operation every $D$ time steps, producing the respective results $M_i$ and $m_i$. Once the infinity norm of the difference between $M_i$ and $m_i$ is less than or equal to one, each node scales its solution by the quantization level to compute $\hat z_i^{[k+1]}$; Algorithm 2 then terminates, and all nodes transition to step 3 of Algorithm 1. Remarkably, Algorithm 2 ensures finite-time convergence satisfying $||M_i-m_i||_\infty \leq1$, with its convergence time dependent on the network diameter $D$, as validated by \cite{rikos2022non}.

\textbf{Comparison with Previous Works.} The primary contribution of this work is the development of a communication-efficient, fully decentralized linearized algorithm to solve \eqref{eq: quantized problem}. 
Unlike the IC-ADMM proposed in \cite{zhou2023federated}, which relies on a central server and requires the exchange of unquantized, real-valued messages, our DQLCA algorithm achieves full decentralization and efficient communication. 
We eliminate the central coordinator and enhance communication efficiency by introducing the local tracking variable $\hat z_i$ and enforcing quantized communication via Algorithm~\ref{alg:QuAS}. 
Furthermore, compared to the exact quantized C-ADMM framework in \cite{rikos2023asynchronous}, DQLCA incorporates a linearized primal update to significantly reduce the computational burden at each node. 
Finally, in contrast to these prior works, our proposed method guarantees global linear convergence. 
The theoretical details and proofs are provided in the subsequent section.

\begin{algorithm}[h]
	\caption{FTQAC: Finite Time Quantized Average Consensus}
	\textbf{Input:} $\hat x_{i}^{[k+1]}+\hat\lambda_{i}^{[k]}/\rho, D, \Delta$. \\
    \textbf{Initialization.} Each node $i \in \mathcal{V}:$
    \begin{enumerate}
    \item Assigns probability 
    \begin{equation} p_{li}=\left\{
   \begin{aligned}
       &\frac{1}{1+\mathcal D_i^+},\quad& \text{if}\; l\in \mathcal N_i^+\cup \{i\},\\
       &0,\quad & \text{if}\; l\notin \mathcal N_i^+\cup \{i\},
   \end{aligned}
      \right.   
    \end{equation} 
    to each out-neighbor of node $i$.
    \item Sets $\xi_i = 2$, $\chi_i= 2 q^a_{\Delta}(\hat x_{i}^{[k+1]}+\hat\lambda_{i}^{[k]}/\rho)$ (see \eqref{eq: quantizer}).
	\end{enumerate}
	\textbf{Iteration.} For time steps $t=1,2,\cdots$ each node $i \in \mathcal V$ does: 
	\begin{enumerate}
	\item \textbf{If} $t\;\text{mod}(D) = 1$, sets $M_i = \left\lceil \frac{\chi_i}{\xi_i} \right\rceil$ and $m_i=\left\lfloor \frac{\chi_i}{\xi_i} \right\rfloor$.\vspace{2mm}
\item 
Broadcasts $M_i, m_i$ to each out-neighbor $l \in \mathcal N_i^+$ and receives $M_j, m_j$ from each in-neighbor $j \in \mathcal N_i^-$. 
Then, sets $M_i = \text{max}_{j \in \mathcal N_i^- \cup \{i\}}\; M_j$, $m_i = \text{min}_{j \in \mathcal N_i^-\cup \{i\}}\; m_j$. \vspace{2mm}
\item Sets $\tau_i = \xi_i$. \vspace{2mm}
\item \textbf{While} $\tau_i>1$ \textbf{do}
\begin{enumerate}
\item $c_i=\left\lfloor \frac{\chi_i}{\xi_i} \right\rfloor$. 
\item Sets $\chi_i=\chi_i - c_i$, $\xi_i=\xi_i-1$, $\tau_i=\ADDBoyu{\tau_i}-1$. 
\item Transmits $c_i$ to randomly chosen out-neighbor $l \in \mathcal N_i^+\cup \{i\}$ with probability $p_{li}$. 
\item  Receives $c_i$ from $j\in \mathcal N_i^-$ and updates
    \begin{subequations}\label{eq: local information avg}
    \begin{align}
    \chi_i^{[t+1]} &= \chi_i^{[t]} + \sum_{j\in \mathcal N_i^- }w_{ij}^{[t]} c_j^{[t]}, \\
    \xi_i^{[t+1]} &= \xi_i^{[t]} + \sum_{j\in \mathcal N_i^-} w_{ij}^{[t]}.
        \end{align}
    \end{subequations}    
    Here $w_{ij}^{[t]} = 1$ if node $i$ receives $c_j^{[t]}$ from node $j$ at step $t$. 
    Otherwise $w_{ij}^{[t]}=0$ and node $i$ does not receive information from node $j$. 
  \end{enumerate}
    \item \textbf{if} $t\; \text{mod}\; (D)=0$ and $\left\|M_i-m_i\right\|_{\infty}\leq 1$, set $\hat z_i^{[k+1]} = m_i \Delta$, and stop the operation of the algorithm. 
    \end{enumerate}
\textbf{Output.} $\hat z_i^{[k+1]}$.
	\label{alg:QuAS}
\end{algorithm}

\subsection{Convergence Analysis}\label{subsec:Convergence Analysis}

We now establish the theoretical guarantees for the proposed Algorithm~\ref{alg: DQLCA}. 
Our analysis relies on the strict bounds provided by the strong convexity and Lipschitz smoothness of each local cost function, as formalized in Assumptions~\ref{ass:1}. 
Before presenting the main convergence result, we first introduce two foundational lemmas to support the rigorous analysis.

\begin{lemma}\label{lemma:1}
Based on the update rules in Algorithm \ref{alg: DQLCA}, the relationship between the local dual variable $\hat \lambda_i$, the local tracking variable $\hat z_i$ and the gradient of the local cost function evaluated at $\hat z_i^{[k]}$ is given by: 
\begin{equation}\label{eq: sub1}
    \nabla f_i(\hat z_i^{[k]}) = \rho(\hat z_i^{[k]} - \hat z_i^{[k+1]}) - \hat \lambda_i^{[k+1]}.
\end{equation}
\end{lemma}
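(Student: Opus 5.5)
This identity follows purely algebraically from two of the update rules of Algorithm \ref{alg: DQLCA}: the linearized primal step \eqref{eq: new local primal} and the dual step \eqref{eq: dual update 1}. The tracking update \eqref{eq: global estimation}, and hence the quantized consensus of Algorithm \ref{alg:QuAS}, never enters. Whatever value $\hat z_i^{[k+1]}$ Algorithm \ref{alg:QuAS} returns, it is simply substituted into \eqref{eq: dual update 1}. No property of the quantizer or of the graph is needed, and Assumption \ref{ass:1} is used only so that $\nabla f_i(\hat z_i^{[k]})$ is well defined.

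First, multiply \eqref{eq: new local primal} by $\rho>0$ and solve for the gradient:
\begin{equation*}
\nabla f_i(\hat z_i^{[k]}) = \rho\bigl(\hat z_i^{[k]} - \hat x_i^{[k+1]}\bigr) - \hat\lambda_i^{[k]}.
\end{equation*}
Second, rearrange \eqref{eq: dual update 1} to express $\hat x_i^{[k+1]}$ through the new dual and tracking variables:
\begin{equation*}
\hat x_i^{[k+1]} = \hat z_i^{[k+1]} + \frac{1}{\rho}\bigl(\hat\lambda_i^{[k+1]} - \hat\lambda_i^{[k]}\bigr).
\end{equation*}
Third, substitute this expression into the previous display. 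The term $\rho\cdot\frac{1}{\rho}\hat\lambda_i^{[k]}$ enters with a plus sign and cancels the $-\hat\lambda_i^{[k]}$ already present. What remains is
\begin{equation*}
\nabla f_i(\hat z_i^{[k]}) = \rho\bigl(\hat z_i^{[k]} - \hat z_i^{[k+1]}\bigr) - \hat\lambda_i^{[k+1]},
\end{equation*}
which is exactly \eqref{eq: sub1}.

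There is no real obstacle here. The only point requiring care is the sign bookkeeping in the third step, specifically checking that the old multiplier $\hat\lambda_i^{[k]}$ cancels so that only the new multiplier $\hat\lambda_i^{[k+1]}$ survives. The identity holds for every node $i$ and every iteration $k\ge 0$.
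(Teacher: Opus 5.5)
Your proof is correct and is essentially the paper's argument. The paper substitutes the primal step \eqref{eq: new local primal} into the dual step \eqref{eq: dual update 1} and isolates the gradient, which is the same cancellation of $\hat\lambda_i^{[k]}$ that you carry out with the substitution in the reverse order.
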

\textit{Proof.} See Appendix \ref{app 1}.  \hfill$\blacksquare$

\begin{lemma} \label{lemma:2}
The update of the local tracking variable $\hat z_i$ of each node $i \in \mathcal{V}$ is given by Algorithm~\ref{alg: DQLCA} in \eqref{eq: global estimation}. 
According to the constraints of \eqref{eq: quantized problem}, the following equations are satisfied:

\begin{subequations}\label{eq: error}
\begin{align}
    &\hat{z}_i^{[k+1]} = \frac{1}{N} \sum_{i=1}^N \Delta 
      \left\lfloor \frac{\hat x_i^{[k+1]}+\hat{\lambda}_i^{[k]}/\rho}{\Delta} \right\rfloor + \kappa_i, 
      \quad \left\| \kappa_i \right\|_{\infty} \leq \Delta,\label{error:1} \\
    &\left\|z^{[k+1]}-\hat{z}_i^{[k+1]}\right\|_\infty \leq 2\Delta,\label{error:2} \\
    &\left\|\lambda_i^{[k+1]}-\hat{\lambda}_i^{[k+1]}\right\|_\infty \leq 2(2\rho+\sqrt{n}L_i)\Delta.\label{error:3}
\end{align}
\end{subequations}

\end{lemma}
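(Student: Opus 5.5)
Part \eqref{error:1} comes from the finite-time guarantee of Algorithm~\ref{alg:QuAS} inherited from \cite{rikos2022non}. At initialization each node holds $\chi_i = 2q^a_\Delta(\cdot)$ and $\xi_i=2$. The updates \eqref{eq: local information avg} only move integer pieces $c_i$ together with unit tokens, so $\sum_i \chi_i$ and $\sum_i \xi_i = 2N$ are invariant. The ratio $\sum_i\chi_i/\sum_i\xi_i$ therefore equals the average $\bar q=\frac1N\sum_i \lfloor(\hat x_i^{[k+1]}+\hat\lambda_i^{[k]}/\rho)/\Delta\rfloor$ of the quantized inputs. When the stopping test $\|M_i-m_i\|_\infty\le1$ fires, max/min consensus has run over $D$ steps, so $M_i=\max_j\lceil\chi_j/\xi_j\rceil$ and $m_i=\min_j\lfloor\chi_j/\xi_j\rfloor$. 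Since $\bar q$ is a weighted average of the ratios $\chi_j/\xi_j$, we get $m_i\le\bar q\le M_i\le m_i+1$ componentwise. Hence $\hat z_i^{[k+1]}=m_i\Delta=\Delta\bar q+\kappa_i$ with $\|\kappa_i\|_\infty\le\Delta$, which is \eqref{error:1}. Finite-time termination is taken from \cite{rikos2022non}.

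For \eqref{error:2}, I interpret $z^{[k+1]}$ as the exact average in \eqref{eq:step2-closed}, evaluated at the same arguments $\hat x_i^{[k+1]}+\hat\lambda_i^{[k]}/\rho$. This is the comparison "according to the constraints" that the statement refers to. The floor satisfies $y-\Delta< \Delta\lfloor y/\Delta\rfloor\le y$ componentwise, so $\|z^{[k+1]}-\Delta\bar q\|_\infty\le\Delta$. Combining this with $\|\kappa_i\|_\infty\le\Delta$ by the triangle inequality gives $2\Delta$.

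For \eqref{error:3}, I compare the exact dual update \eqref{eq:step3-closed} with \eqref{eq: dual update 1}, taking both one-step maps from the same previous iterates. The primal $x_i^{[k+1]}$ is computed by \eqref{eq:step1-closed} with the exact $z^{[k]}$, while $\hat x_i^{[k+1]}$ uses $\hat z_i^{[k]}$. Then
\[
\lambda_i^{[k+1]}-\hat\lambda_i^{[k+1]}=\rho(x_i^{[k+1]}-\hat x_i^{[k+1]})-\rho(z^{[k+1]}-\hat z_i^{[k+1]}),
\]
and
\[
\rho(x_i^{[k+1]}-\hat x_i^{[k+1]})=\rho(z^{[k]}-\hat z_i^{[k]})-\bigl(\nabla f_i(z^{[k]})-\nabla f_i(\hat z_i^{[k]})\bigr).
\]
I bound the gradient term in the $\infty$-norm via \eqref{eq: lip}:
\[
\|\cdot\|_\infty\le\|\cdot\|_2\le L_i\|z^{[k]}-\hat z_i^{[k]}\|_2\le \sqrt n L_i\|z^{[k]}-\hat z_i^{[k]}\|_\infty.
\]
Applying \eqref{error:2} at steps $k$ and $k+1$ then gives at most $2\rho\Delta+2\sqrt nL_i\Delta+2\rho\Delta=2(2\rho+\sqrt nL_i)\Delta$.

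The main obstacle is bookkeeping: deciding which quantities the "exact" variables $z^{[k]},\lambda_i^{[k]}$ denote so that errors do not accumulate over iterations. I would fix them as the one-step exact updates driven by the current inexact iterates; in particular the exact and inexact dual updates start from the same $\hat\lambda_i^{[k]}$. This keeps the bound per-step and avoids a recursion.
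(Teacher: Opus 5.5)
Your proposal is correct. For \eqref{error:3} it follows the paper's argument: the same split into $\rho(z^{[k]}-\hat z_i^{[k]})-(\nabla f_i(z^{[k]})-\nabla f_i(\hat z_i^{[k]}))$ plus $\rho(z^{[k+1]}-\hat z_i^{[k+1]})$, the same $\sqrt n$ norm-equivalence step, and \eqref{error:2} applied at $k$ and $k+1$.

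There is one difference in \eqref{error:3}. The paper observes that once both linearized primal updates are substituted, $\lambda_i^{[k]}$ and $\hat\lambda_i^{[k]}$ cancel identically. So your convention that both dual updates start from the same $\hat\lambda_i^{[k]}$ is unnecessary, and the bound holds for arbitrary previous duals. The convention that actually prevents error accumulation is the one you fix for $z$. Namely, $z^{[k]}$ and $z^{[k+1]}$ must be the exact averages of the very inputs that Algorithm~\ref{alg:QuAS} quantizes; otherwise \eqref{error:2} fails. The paper relies on this reading only implicitly.

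For \eqref{error:1}--\eqref{error:2} the paper gives no argument and simply cites \cite[Lemma~1]{rikos2023distributed2}. You instead derive them from conservation of $\sum_i\chi_i$ and $\sum_i\xi_i$ together with the max/min stopping test, importing only finite-time termination from \cite{rikos2022non}. This makes the lemma self-contained. It also shows that every node stops with the same $m_i$, i.e., $\hat z_1^{[k+1]}=\cdots=\hat z_N^{[k+1]}$, which the proof of the main theorem uses without justification.

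To make your derivation airtight, state that $M_i$ and $m_i$ are the max/min of the ratios at the snapshot $t \bmod D = 1$, not at the stopping time, since $\chi_j$ and $\xi_j$ keep changing during the window. The weighted-average argument $\bar q=\sum_j\xi_j(\chi_j/\xi_j)/\sum_j\xi_j$ must be applied at that snapshot, where all $\xi_j\ge 1$.
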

\textit{Proof.} For \eqref{error:1} and \eqref{error:2}, see \cite[Lemma~1]{rikos2023distributed2}; for \eqref{error:3}, see Appendix~\ref{app 2}. \hfill$\blacksquare$ 

\vspace{.3cm} 

Following the analysis in \cite{jiang2021asynchronous}, we define the positive scalars $0<M_z<\infty$, such that $\left\|z^{[k]}-z^*\right\|\leq M_z$, to simplify our later analysis. The main convergence result is formally stated in the following theorem.
\begin{theorem}[Global Linear Convergence of DQLCA]\label{thm: convergence}
Suppose Assumptions \ref{ass:3} and \ref{ass:1} hold, and let $z^*$ be the optimal solution to problem \eqref{eq: quantized problem}. 
If the penalty parameter $\rho$ satisfies the condition:
\begin{equation}\label{eq: rho condition}
    \rho > \frac{\left(\sum_{i=1}^N L_i\right)^2}{N\sum_{i=1}^N \mu_i},
\end{equation}
then the sequence $\{\hat z_i^k\}$ generated by Algorithm \ref{alg: DQLCA} converges globally and linearly to the \ADDBoyu{neighborhood} of optimal solution $z^*$, such that:
\begin{equation}
    \left\| \hat z_i^{k+1} - z^*\right\|^2 \leq \theta \left\| \hat z_i^k - z^*\right\|^2+\mathcal{O}(\Delta),
\end{equation}
where $\mathcal{O}(\Delta)=4M_z\left(2+\sqrt{n}\sum_{i=1}^NL_i/\left(\rho N\right)\right)\Delta$. \ADDBoYu{Furthermore, $\theta \in (0, 1)$ represents the linear convergence rate, which is defined as $\theta = 1 - \frac{2\sum_{i=1}^N \mu_i}{\rho N} + \frac{2\left(\sum_{i=1}^N L_i\right)^2}{\rho^2 N^2}$.} 
\end{theorem}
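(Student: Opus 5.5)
Plan: compare the DQLCA iterates with the exact linearized C-ADMM recursion \eqref{eq: steps of linearized ADMM closed-form}. The exact recursion will give a gradient-descent-type contraction; the quantization errors will be pushed into the $\mathcal O(\Delta)$ term using Lemma~\ref{lemma:2}.

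\textbf{Step 1 (averaged dynamics).} Since $\sum_i\hat\lambda_i^{[0]}=0$, the update \eqref{eq: dual update 1} keeps $\sum_i\hat\lambda_i^{[k]}$ at zero up to quantization error. In the exact recursion, summing \eqref{eq:step3-closed} with \eqref{eq:step2-closed} gives $\sum_i\lambda_i^{[k]}=0$ for all $k$. Substituting \eqref{eq:step1-closed} into \eqref{eq:step2-closed} then yields
\[
z^{[k+1]}=\frac1N\sum_{i=1}^N\Big(z_i^{[k]}-\frac1\rho\nabla f_i(z_i^{[k]})\Big).
\]
For the quantized scheme, Lemma~\ref{lemma:1} together with \eqref{error:1} gives the same identity with $\hat z_i^{[k]}$ in place of $z_i^{[k]}$, plus a perturbation. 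That perturbation is bounded componentwise by the $2\Delta$ of \eqref{error:2} and, through the dual mismatch \eqref{error:3}, by an additional term of order $\sqrt n L_i\Delta/\rho$ per node. Averaging these bounds gives $e^{[k]}$ with $\|e^{[k]}\|\le(2+\sqrt n\sum_iL_i/(\rho N))\Delta$, up to constants. So I will write
\[
\hat z_i^{[k+1]}=\hat z^{[k]}-\frac{1}{\rho N}\sum_{i=1}^N\nabla f_i(\hat z^{[k]})+e^{[k]}.
\]
Here I treat all $\hat z_i^{[k]}$ as equal to a common value up to the same error class. This is justified because FTQAC returns the same $m_i\Delta$ at every node once $\|M_i-m_i\|_\infty\le1$.

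\textbf{Step 2 (contraction).} Set $F=\sum_if_i$. Optimality gives $\nabla F(z^*)=0$. Then
\[
\Big\|\hat z^{[k]}-z^*-\tfrac{1}{\rho N}\big(\nabla F(\hat z^{[k]})-\nabla F(z^*)\big)\Big\|^2 .
\]
I will expand this square. The cross term uses strong monotonicity, $\langle\nabla F(a)-\nabla F(b),a-b\rangle\ge\sum_i\mu_i\|a-b\|^2$. The last term uses $\|\nabla F(a)-\nabla F(b)\|\le\sum_iL_i\|a-b\|$. This gives the factor
\[
1-\frac{2\sum_i\mu_i}{\rho N}+\frac{(\sum_iL_i)^2}{\rho^2N^2}.
\]
The stated $\theta$ carries a factor $2$ on the last term; I expect it to come from a Young-type split when the error is included, so I will aim for it rather than the sharper constant. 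Condition \eqref{eq: rho condition} is exactly what makes $\theta<1$, since it implies $2(\sum_iL_i)^2/(\rho^2N^2)<2\sum_i\mu_i/(\rho N)$. Positivity of $\theta$ follows from $\mu_i\le L_i$.

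\textbf{Step 3 (error term).} Expand $\|a+e\|^2=\|a\|^2+2\langle a,e\rangle+\|e\|^2$. I will bound $2\langle a,e\rangle$ by $2\|a\|\|e\|$, using $\|a\|\lesssim\|z^{[k]}-z^*\|\le M_z$. This produces $4M_z(2+\sqrt n\sum_iL_i/(\rho N))\Delta$ after collecting constants. The $\|e\|^2=\mathcal O(\Delta^2)$ term is absorbed, treating $\Delta$ as small. \textbf{The main obstacle} is Step~1: cleanly transferring the dual mismatch \eqref{error:3} and the nonexact sum $\sum_i\hat\lambda_i$ into a single averaged error vector. I would handle it by working with $z^{[k+1]}$ from the exact update driven by the quantized iterates, and then invoking \eqref{error:2} at the end to return to $\hat z_i^{[k+1]}$.
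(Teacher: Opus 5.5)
Your proposal is correct and is essentially the paper's proof. The paper also sums Lemma~\ref{lemma:1} under consensus to get $\rho N(\hat z^{[k]}-\hat z^{[k+1]})=\sum_i\nabla f_i(\hat z^{[k]})+\sum_i\hat\lambda_i^{[k+1]}$, controls the dual sum via $\sum_i\lambda_i^{[k+1]}=0$ and \eqref{error:3}, bounds the cross term with $M_z$, and drops the $\mathcal{O}(\Delta^2)$ terms; it only expands the square through strong convexity and the three-point identity rather than as $\|a+e\|^2$. The factor $2$ in $\theta$ comes from the paper's Young split $\|u+v\|^2\le 2\|u\|^2+2\|v\|^2$ in \eqref{eq: a3e5}, so your sharper factor already implies the stated $\theta$; and on your fallback route the ``main obstacle'' disappears, because the FTQAC input $\hat x_i^{[k+1]}+\hat\lambda_i^{[k]}/\rho=\hat z_i^{[k]}-\frac{1}{\rho}\nabla f_i(\hat z_i^{[k]})$ contains no dual variable, so \eqref{error:1} alone gives $\|e^{[k]}\|_\infty\le 2\Delta$.
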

\textit{Proof.} 
See Appendix \ref{app 3}. \hfill$\blacksquare$ 

\vspace{.3cm}

Theorem~\ref{thm: convergence} establishes that Algorithm~\ref{alg: DQLCA} achieves global linear convergence to a bounded neighborhood of the exact optimal solution. 
\ADDBoYu{Specifically, if the quantization level approaches to zero ($\Delta \to 0$), this error neighborhood completely vanishes, allowing the proposed algorithm to converge to the exact optimal solution. In this scenario, our approach essentially functions as an inexact extension of the \emph{D-ADMM-FTERC} algorithm presented in \cite{jiang2021distributed}. 
To explicitly highlight this relationship, we will utilize this inexact \emph{D-ADMM-FTERC} variant as a baseline for comparative performance analysis in the numerical experiments presented in the next section.}

\begin{remark}
It is worth highlighting the distinction between 
the convergence rate in Theorem \ref{thm: convergence} and prior results in \cite{rikos2023asynchronous}. 
The authors in \cite{rikos2023asynchronous} employ an exact C-ADMM framework paired with finite-time quantized averaging, which guarantees a sublinear convergence rate typical for general convex objective functions. 
In contrast, by explicitly leveraging the $\mu_i$-strong convexity and $L_i$-smoothness of the local cost functions, our proposed DQLCA algorithm guarantees global linear convergence. 
\ADDBoYu{Notably, this accelerated theoretical rate is achieved while simultaneously decreasing local computation costs. Exact C-ADMM requires solving a computationally expensive subproblem at each iteration, which typically relies on an external solver. By replacing this with a simple linearized primal update, our method substantially reduces the computational burden at each node.}
\end{remark}

\section{Numerical Simulation}

In this section, we present simulation results to demonstrate the performance of our algorithm. 
To highlight the computational improvements introduced by our proposed method, \ADDBoYu{we compare it against the exact unquantized and quantized C-ADMM frameworks proposed in \cite{jiang2021distributed} and \cite{rikos2023asynchronous}, respectively}.

We consider a decentralized network consisting of $N=50$ nodes, modeled as a strongly connected digraph. 
Each node $i$ has a local cost function $f_i(x_i)=0.5 x_i^\top P_i x_i + q_i^\top x_i$, where $x_i \in \mathbb{R}^n, P_i \in \mathbb{S}_{++}^n$ and $q_i \in \mathbb{R}^n$ for all $i \in \{{1,2, ..., N}\}$. 
To ensure the strong convexity and $L-$smoothness conditions required by Assumption~\ref{ass:1}, our problem parameters are generated as follows. 
For each node $i$, (i) $P_i$ was initialized as the square of a randomly generated symmetric matrix $A_i$ to ensure its positive definite property, (ii) $q_i$ was initialized as the negation of the product of the transpose of $A_i$ and a randomly generated vector $b_i$, (iii) a penalty parameter $\rho$ that satisfies the theoretical lower bound established in Theorem \ref{thm: convergence} is selected. 
Using this setup, we execute Algorithm~\ref{alg: DQLCA} to demonstrate how the decision variables converge to the global optimal solution across varying quantization levels: $\Delta=\{10^{-3}, 10^{-4}, 10^{-5}, 10^{-6}\}$. 
To quantify the performance, we track the error at each iteration $k$, defined as $ \sum_{i=1}^N \left\|\hat z_i^{[k]}-z^*\right\|_{\infty}$, where $z^*$ represents the exact optimal solution of problem \eqref{eq: quantized problem}. \ADDBoYu{The results are plotted on a logarithmic scale in Fig.~\ref{fig: result}.} 

\begin{figure}[ht]
	\centering
\includegraphics[width=0.52\textwidth,height=0.315\textheight]{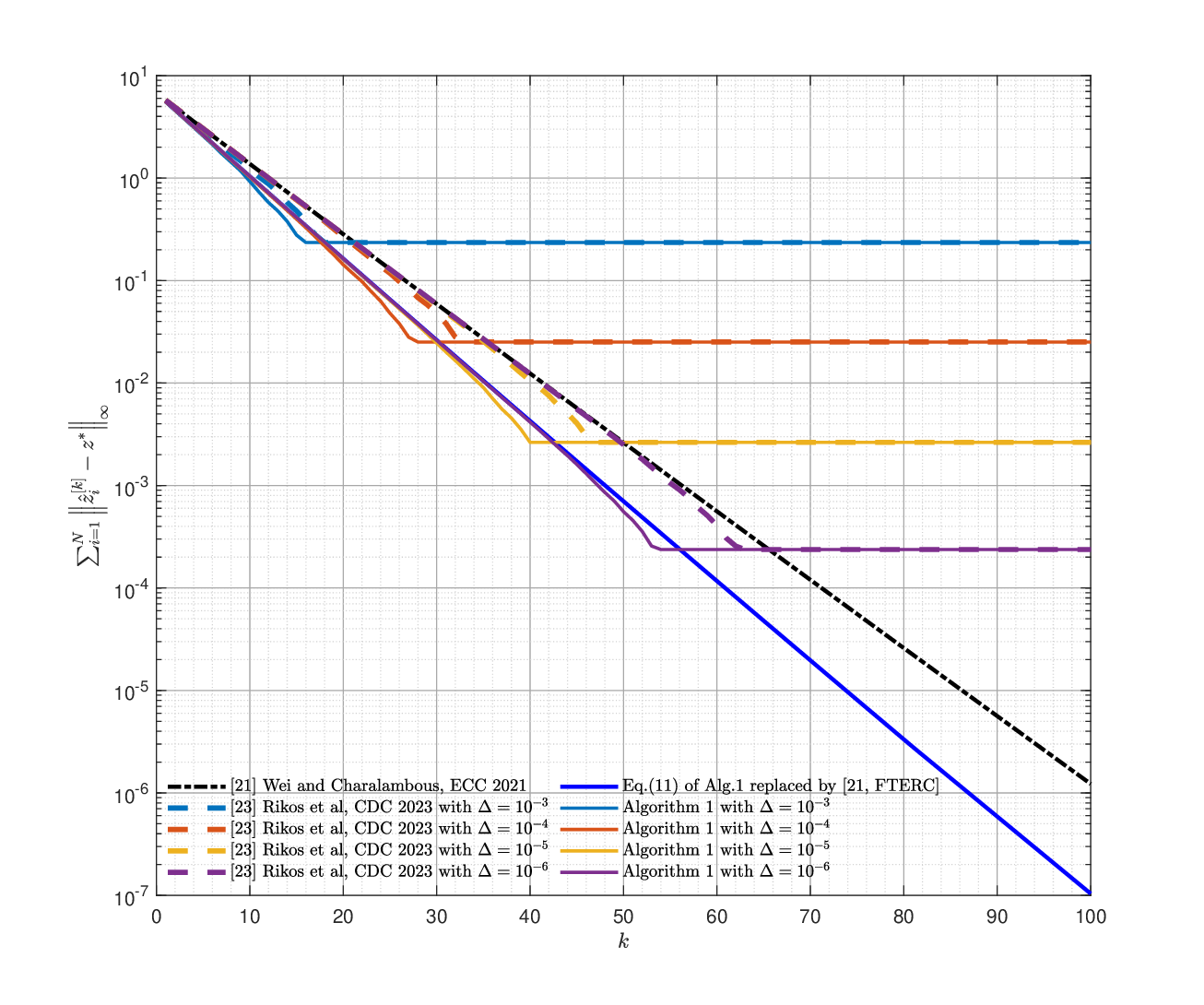}
	\caption{Performance comparison of Algorithm~\ref{alg: DQLCA}, D-ADMM-FTERC~\cite{jiang2021distributed}, and QuAsyADMM~\cite{rikos2023asynchronous} over a directed graph with communication quantization levels \(\Delta = 10^{-3}, 10^{-4}, 10^{-5}, 10^{-6}\).}
	\label{fig: result}
\end{figure}
As illustrated in Fig. \ref{fig: result}, Algorithm \ref{alg: DQLCA} successfully converges to a neighborhood of the optimal solution, the size of which is directly governed by the quantization level $\Delta$. 
Specifically, a smaller $\Delta$ yields a tighter approximation of the exact optimal solution. 
This behavior validates the theoretical bound established in Theorem~\ref{thm: convergence}. 
\ADDBoYu{To evaluate this performance, we benchmark Algorithm \ref{alg: DQLCA} against two exact C-ADMM methods: the unquantized D-ADMM-FTERC in \cite{jiang2021distributed} (black dashed line) and the quantized QuAsyADMM in \cite{rikos2023asynchronous} (colored dashed lines). The results demonstrate that our algorithm achieves the same error floors as the QuAsyADMM, but exhibits a noticeably faster convergence rate. Moreover, Algorithm \ref{alg: DQLCA} achieves accelerated performance while utilizing the linearized primal update, which significantly reduces the computational burden at each node. Finally, to explicitly demonstrate communication efficiency, we compare our method against its own unquantized baseline ($\Delta \to0$, solid blue line), which functions as an inexact extension of D-ADMM-FTERC. While this unquantized baseline reaches the exact optimum, it requires transmitting high-precision continuous variables that can easily overwhelm limited bandwidth. Instead, by accepting a small bounded error floor, our quantized algorithm substantially reduces the transmitted message size, making it practical for deployment in real-world, bandwidth-constrained environments.}



\section{Conclusions and Future Directions}

In this paper, we proposed DQLCA, a novel decentralized optimization algorithm that integrates IC-ADMM  with a finite time quantized averaging protocol. 
We showed that the proposed method enables the computation of the global optimal solution in a fully decentralized manner over directed communication networks. 
Additionally, we established that DQLCA achieves global linear convergence to a neighborhood of the exact optimum that depends on the utilized quantization level. 
Numerical simulations highlight our theoretical findings and demonstrate the algorithm’s efficiency. 

Future research directions will focus on extending the DQLCA framework to accommodate time-varying directed communication topologies, asynchronous update protocols, and operation under relaxed assumptions, such as non-convex local cost functions.



\appendices
\section{Proof of Lemma \ref{lemma:1}}\label{app 1}
By substituting the primal update \eqref{eq: new local primal} into the dual update \eqref{eq: global estimation}, we can isolate the gradient term:
\begin{equation}
    \begin{split}
   \hat \lambda_i^{[k+1]} =& \hat\lambda_i^{[k]} + \rho \left(\hat x_i^{[k+1]} - \hat z_i^{[k+1]}\right)\\
  \overset{\eqref{eq: new local primal}}{=}  & \hat\lambda_i^{[k]} + \rho \left(\hat z_i^{[k]} - \frac{1}{\rho}\left( \nabla f_i\left(\hat z_i^{[k]}\right)+\hat\lambda_i^{[k]}\right) - \hat z_i^{[k+1]}\right)\\
  =& \rho\left(\hat z_i^{[k]} - \hat z_i^{[k+1]}\right) - \nabla f_i\left(\hat z_i^{[k]}\right).
    \end{split}
\end{equation}
Rearranging this relationship yields Lemma \ref{lemma:1}, which concludes the proof.
\section{Proof of Lemma \ref{lemma:2}}\label{app 2}
Here we analyze the difference between the exact dual update and the quantized dual update. \eqref{eq:step3-closed} and \eqref{eq: dual update 1} are used to substitute ${\lambda}_{i}^{[k+1]}$ and $\hat{\lambda}_{i}^{k+1}$ respectively:
\begin{equation} \label{lambda-error}
\begin{aligned}
   &\left \|{\lambda}_{i}^{[k+1]}-\hat{\lambda}_{i}^{k+1}\right\|_{\infty}\\
   = &\left\| \lambda_{i}^{[k]}-\hat \lambda_{i}^{[k]}+\rho\left(x_{i}^{[k+1]}-z^{[k+1]}-\hat x_{i}^{[k+1]}+\hat z_i^{[k+1]}\right)\right\|_{\infty}\\
    \leq &\left\|\lambda_{i}^{[k]}-\hat \lambda_{i}^{[k]} + \rho\left(x_{i}^{[k+1]}-\hat x_{i}^{[k+1]}\right)\right\|_{\infty}\\& + \rho\left\|z^{[k+1]}-\hat z_i^{[k+1]}\right\|_{\infty}.
\end{aligned}
\end{equation}
We split the right-hand side of \eqref{lambda-error} into two components (a), (b). For (a), we substitute the explicit linearized primal updates for $x_i^{[k+1]}$ and $\hat x_i^{[k+1]}$. Notably, the previous dual variables $\lambda_i^{[k]}$ and $\hat \lambda_i^{[k]}$ perfectly cancel out during this substitution. To bound the resulting gradient difference, we apply the $L_i$-smoothness of the local cost functions alongside the standard dimensional norm equivalence $\scriptstyle\|x\|_\infty \leq \|x\|_2 \leq \sqrt{n}\|x\|_\infty$. Given \eqref{error:2}, we obtain:
\begin{equation}
\begin{aligned}
    &\left\|\lambda_{i}^{[k]}-\hat \lambda_{i}^{[k]} + \rho\left(x_{i}^{[k+1]}-\hat x_{i}^{[k+1]}\right)\right\|_{\infty}\\
    =\;&\left\|\rho\left(z^{[k]}-\hat z_i^{[k]}\right)-\left(\nabla f_i\left(z^{[k]}\right)-\nabla f_i\left(\hat z_i^{[k]}\right)\right)\right\|_{\infty}\\
    \overset{\eqref{eq: lip}}{\leq}\;&\rho\left\|z^{[k]}-\hat z_i^{[k]}\right\|_{\infty} + \sqrt{n}L_i\left\| z^{[k]}-\hat z_i^{[k]}\right\|_{\infty}\\
    \overset{\eqref{error:2}}{\leq}& 2(\rho +\sqrt{n}L_i)\Delta.
\end{aligned}
\end{equation}
For (b), it directly yields $\scriptstyle\rho\left\|z^{[k+1]}-\hat z_i^{[k+1]}\right\|_{\infty}\leq2\rho\Delta$. Combining these two components back into the initial expansion yields the constant error bound stated in  \eqref{error:3}, which concludes the proof.

\section{Proof of theorem \ref{thm: convergence}}\label{app 3}
From the dual optimality condition for the consensus distributed optimization problem \cite[chapter 7.2]{boyd2011distributed}, we have:
\begin{equation} \label{eq:consensus dual optimum property}
    \sum_{i=1}^N \lambda_i^{[k]} = 0, \;\sum_{i=1}^N \nabla f_i(z^*) = 0 .
\end{equation}

We begin with the $\mu_i$-strong convexity of the local cost functions. Because the algorithm achieves consensus on the quantized tracking variables at each iteration (i.e., $\hat z_1^{[k]} = \dots = \hat z_N^{[k]}$), we can factor it out of the summation. By summing the standard strong convexity inequalities \eqref{eq: mu} across all nodes $i$, we establish a fundamental bound on the distance to the global optimum:

\begin{equation}\label{eq: a3e1}
    \begin{split}
    & \sum_{i=1}^N \mu_i \left\|\hat z_i^{[k]} -  z^*\right \|^2\\
        \overset{\eqref{eq: mu}}{\leq}&\left(\hat z_i^{[k]} -  z^*\right)^\top\sum_{i=1}^N \left(\nabla f_i\left(\hat z_i^{[k]}\right) - \nabla f_i\left(z^*\right)\right).
    \end{split}
\end{equation}
Substituting \eqref{eq: sub1}, \eqref{eq:consensus dual optimum property} into \eqref{eq: a3e1}, then we get:
\begin{equation}\label{eq: a3e11}\small
    \begin{split}
    &\left(\hat z_i^{[k]} -  z^*\right)^\top\sum_{i=1}^N \left(\rho\left(\hat z_i^{[k]}-\hat z_i^{[k+1]}\right)-\hat \lambda_i^{[k+1]}\right)\\
        =&\rho N\left(\hat z_i^{[k]} -  z^*\right)^\top\left(\hat z_i^{[k]}-\hat z_i^{[k+1]}\right)
        -\left(\hat z_i^{[k]} -  z^*\right)^\top\sum_{i=1}^N\hat \lambda_i^{[k+1]}.
    \end{split}
\end{equation}
Applying the standard inner product identity $\scriptstyle a^\top b = \frac{1}{2}(\|a\|^2 + \|b\|^2 - \|a-b\|^2)$ to the first term yields:
\begin{equation}\label{eq: a3e2}\small
    \begin{split}
    & \rho N\left(\hat z_i^{[k]} -  z^*\right)^\top\left(\hat z_i^{[k]}-\hat z_i^{[k+1]}\right)\\
    =&\frac{\rho N}{2}\left(\left\|\hat z_i^{[k]} -  z^*\right\|^2 + \left\|\hat z_i^{[k]}-\hat z_i^{[k+1]}\right\|^2-\left\|\hat z_i^{[k+1]} -  z^*\right\|^2\right).
    \end{split}
\end{equation}
Substituting \eqref{eq: a3e2} into \eqref{eq: a3e11} and rearrange the inequality, we obtain:
\begin{equation}\label{eq: a3e4}\small
    \begin{split}
    &\frac{\rho N}{2}\left\|\hat z_i^{[k+1]} -  z^*\right\|^2\\
        \leq&\left(\frac{\rho N}{2}-\sum_{i=1}^N \mu_i\right)\left\|\hat z_i^{[k]} -  z^*\right\|^2 + \frac{\rho N}{2}\left\|\hat z_i^{[k]}-\hat z_i^{[k+1]}\right\|^2\\
        &-\left(\hat z_i^{[k]} -  z^*\right)^\top\sum_{i=1}^N\hat \lambda_i^{[k+1]}.
    \end{split}
\end{equation}
To deal with the second term on the right-hand side of \eqref{eq: a3e4}, we leverage \eqref{eq: sub1} by taking the square of its summation over the index from $1 \to N$, then we obtain:
\begin{equation}\small\label{eq: a3e5}
    \begin{split}
        & \rho^2N^2\left\|\hat z_i^{[k]}-\hat z_i^{[k+1]}\right\|^2\\
        =&\left\|\sum_{i=1}^N\nabla f_i\left(\hat z_i^{[k]}\right) + \sum_{i=1}^N\hat \lambda_i^{[k+1]}\right\|^2\\
        \overset{\eqref{eq:consensus dual optimum property}}{\leq}&2\left\|\sum_{i=1}^N\left(\nabla f_i\left(\hat z_i^{[k]}\right) -\nabla f_i\left(z^*\right)\right)\right\|^2+2\left\|\sum_{i=1}^N\hat \lambda_i^{[k+1]}\right\|^2\\
        \overset{\eqref{eq: lip}}{\leq}& 2\left(\sum_{i=1}^NL_i\right)^2\left\|\hat z_i^{[k]}-z^*\right\|^2 +2\left\|\sum_{i=1}^N\hat \lambda_i^{[k+1]}\right\|^2.
    \end{split}
\end{equation}
Substituting \eqref{eq: a3e5}, scaled by $\frac{1}{2\rho N}$, into \eqref{eq: a3e4}, we obtain:
\begin{equation}\label{eq: a3e6}\small
    \begin{split}
    &\frac{\rho N}{2}\left(\left\|\hat z_i^{[k+1]} -  z^*\right\|^2\right)\\
        \leq&\left(\frac{\rho N}{2}-\sum_{i=1}^N \mu_i + \frac{\left(\sum_{i=1}^NL_i\right)^2}{\rho N}\right)\left\|\hat z_i^{[k]} -  z^*\right\|^2 \\
        &-\left(\hat z_i^{[k]} -  z^*\right)^\top\sum_{i=1}^N\hat \lambda_i^{[k+1]}+\frac{1}{\rho N}\left\|\sum_{i=1}^N\hat \lambda_i^{[k+1]}\right\|^2.
    \end{split}
\end{equation}
According to \eqref{error:2}, \eqref{error:3} of lemma \ref{lemma:2} and \eqref{eq:consensus dual optimum property}, we deal with the second and third term of right-hand side of \eqref{eq: a3e6}:
\begin{equation}\label{eq: a3e7}\small
\begin{split}
    &\left(\hat z_i^{[k]} -  z^*\right)^\top\sum_{i=1}^N\hat \lambda_i^{[k+1]}\\
    \overset{\eqref{eq:consensus dual optimum property}}{=}\quad&\left(\hat z_i^{[k]} - z^{[k]} + \left(z^{[k]} -z^*\right)\right)^\top\sum_{i=1}^N\left(\hat \lambda_i^{[k+1]}-\lambda_i^{[k+1]}\right)\\
    \overset{\eqref{error:2},\eqref{error:3}}{\leq}&2(2\Delta + M_z)\left(2\rho N+\sqrt{n}\sum_{i=1}^NL_i\right)\Delta,
\end{split}
\end{equation}
and
\begin{equation}\label{eq: a3e8}
    \begin{split}
        \left\|\sum_{i=1}^N\hat \lambda_i^{[k+1]}\right\|^2\overset{\eqref{eq:consensus dual optimum property}}{=}&\left\|\sum_{i=1}^N\left(\hat \lambda_i^{[k+1]}-\lambda_i^{[k+1]}\right)\right\|^2\\
        \overset{\eqref{error:3}}{\leq}&4n\left(2N\rho+\sqrt{n}\sum_{i=1}^NL_i\right)^2\Delta^2.
    \end{split}
\end{equation}
Finally, substituting \eqref{eq: a3e7} and \eqref{eq: a3e8} into \eqref{eq: a3e6}, scaled by $\frac{2}{\rho N}$, then we arrive at the final convergence bound:
\begin{equation}\label{eq: a3e9}
    \begin{split}
        &\left\|\hat z_i^{[k+1]} -  z^*\right\|^2\\
        \leq&\hspace{-.1cm}\underbrace{\left(1-\frac{2\sum_{i=1}^N \mu_i}{\rho N}+\frac{2\left(\sum_{i=1}^NL_i\right)^2}{\rho^2N^2}\right)}_{\theta}\hspace{-.1cm}\left\|\hat z_i^{[k]} -  z^*\right\|^2 \hspace{-.2cm}+\mathcal{O}(\Delta),
    \end{split}
\end{equation}
where $\mathcal{O}(\Delta)=4M_z\left(2+\sqrt{n}\sum_{i=1}^NL_i/\left(\rho N\right)\right)\Delta$. Note that in \eqref{eq: a3e9} we omitted the higher-order terms of $\Delta$, as they are significantly smaller than the first-order terms and can be considered negligible. 
Provided that  $\rho > \frac{\left(\sum_{i=1}^N L_i\right)^2}{N\sum_{i=1}^N \mu_i}$, the constriction factor
 $\theta$ in \eqref{eq: a3e9} is guaranteed to satisfy  $0<\theta<1$. This condition ensures the linear convergence of the algorithm. Consequently, Theorem \ref{thm: convergence} is proved.\\

\bibliographystyle{IEEEtran}   
\bibliography{references}      

@book{boyd2011distributed,
	title={Distributed optimization and statistical learning via the alternating direction method of multipliers},
	author={Boyd, Stephen and Parikh, Neal and Chu, Eric},
	year={2011},
	publisher={Now Publishers Inc}
}

@article{rikos2023distributed2,
  title={Distributed optimization with gradient descent and quantized communication},
  author={Rikos, Apostolos I and Jiang, Wei and Charalambous, Themistoklis and Johansson, Karl H},
  journal={IFAC-PapersOnLine},
  volume={56},
  number={2},
  pages={5900--5906},
  year={2023},
  publisher={Elsevier}
}

@inproceedings{du2025cdc,
title={Decentralized optimization via {RC-ALADIN} with efficient quantized communication},
author={Du, Xu and Johansson, Karl H and Rikos, Apostolos I},
booktitle={2025 IEEE 64th Conference on Decision and Control (CDC)},
pages={4357–4363},
year={2025},
organization={IEEE}
}

@article{dist_structure3,
    title={A Decentralized Framework for Multi-Agent Robotic Systems},
    author={A. Jiménez and Vicente García Díaz and Sandro Javier Bolaños Castro},
    journal={Sensors (Basel, Switzerland)},
    year={2018},
    volume={18},
    doi={10.3390/s18020417}}

@article{2024_doostmohammadian_rikos_Johansson_survey,
title = {Survey of distributed algorithms for resource allocation over multi-agent systems},
journal = {Annual Reviews in Control},
author={Doostmohammadian, Mohammadreza and Aghasi, Alireza and Pirani, Mohammad and Nekouei, Ehsan and Zarrabi, Houman and Keypour, Reza and Rikos, Apostolos I. and Johansson, Karl H.},
volume = {59}, 
pages = {100983}, 
year={2025}
}

@article{lanza2025distributed,
  title={Distributed optimization for energy grids: a tutorial on {ADMM} and {ALADIN}},
  author={Lanza, Lukas and Faulwasser, Timm and Worthmann, Karl},
  journal={System level control and optimisation of microgrids},
  pages={121--145},
  year={2025}
}

@article{dist_structure4,
    title={Blockchain meets machine learning: a survey},
    author={S. Kayikci and T. Khoshgoftaar},
    journal={Journal of Big Data},
    year={2024},
    volume={11},
    pages={1-29},
    doi={10.1186/s40537-023-00852-y}}

@article{dist_structure5,
  title={Distributed learning applications in power systems: A review of methods, gaps, and challenges},
  author={Gholizadeh, Nastaran and Musilek, Petr},
  journal={Energies},
  volume={14},
  number={12},
  pages={3654},
  year={2021},
  publisher={MDPI}
}

@article{hong2016convergence,
  title={Convergence analysis of alternating direction method of multipliers for a family of nonconvex problems},
  author={Hong, Mingyi and Luo, Zhi-Quan and Razaviyayn, Meisam},
  journal={SIAM Journal on Optimization},
  volume={26},
  number={1},
  pages={337--364},
  year={2016},
  publisher={SIAM}
}

@incollection{camacho2026introduction,
  title={Introduction to model predictive control},
  author={Camacho, Eduardo F and Bordons, Carlos and Maestre, Jos{\'e} M},
  booktitle={Model Predictive Control},
  pages={1--23},
  year={2026},
  publisher={Springer}
}

@article{zhang2025enabling,
  title={Enabling scalable distributed beamforming via networked LEO satellites towards 6G},
  author={Zhang, Yuchen and Al-Naffouri, Tareq Y},
  journal={IEEE Transactions on Wireless Communications},
  year={2025},
  publisher={IEEE}
}

@article{zhou2026preconditioned,
  title={Preconditioned inexact stochastic {ADMM} for deep models},
  author={Zhou, Shenglong and Wang, Ouya and Luo, Ziyan and Zhu, Yongxu and Li, Geoffrey Ye},
  journal={Nature Machine Intelligence},
  pages={1--12},
  year={2026},
  publisher={Nature Publishing Group UK London}
}

@article{boyd2007notes,
  title={Notes on decomposition methods},
  author={Boyd, Stephen and Xiao, Lin and Mutapcic, Almir and Mattingley, Jacob},
  journal={Notes for EE364B, Stanford University},
  volume={635},
  pages={1--36},
  year={2007}
}

@ARTICLE{11373371,
  author={Feng, Zhangcheng and Xu, Wenying and Cao, Jinde and Yang, Shaofu},
  journal={IEEE Transactions on Control of Network Systems}, 
  title={{IDO-ADMM}: Inexact Distributed Online Alternating Direction Method of Multipliers Under Full-Information and Bandit Feedback}, 
  year={2026},
  volume={},
  number={},
  pages={1-12},
  doi={10.1109/TCNS.2026.3662099}}

@article{xu2023qc,
  title={{QC-ODKLA}: Quantized and communication-censored online decentralized kernel learning via linearized {ADMM}},
  author={Xu, Ping and Wang, Yue and Chen, Xiang and Tian, Zhi},
  journal={IEEE transactions on neural networks and learning systems},
  volume={35},
  number={12},
  pages={17987--17999},
  year={2023},
  publisher={IEEE}
}

@article{li2019communication,
  title={Communication-censored linearized {ADMM} for decentralized consensus optimization},
  author={Li, Weiyu and Liu, Yaohua and Tian, Zhi and Ling, Qing},
  journal={IEEE Transactions on Signal and Information Processing over Networks},
  volume={6},
  pages={18--34},
  year={2019},
  publisher={IEEE}
}

@article{ling2015dlm,
  title={{DLM}: Decentralized linearized alternating direction method of multipliers},
  author={Ling, Qing and Shi, Wei and Wu, Gang and Ribeiro, Alejandro},
  journal={IEEE Transactions on Signal Processing},
  volume={63},
  number={15},
  pages={4051--4064},
  year={2015},
  publisher={IEEE}
}

@article{mokhtari2016dqm,
  title={{DQM}: Decentralized quadratically approximated alternating direction method of multipliers},
  author={Mokhtari, Aryan and Shi, Wei and Ling, Qing and Ribeiro, Alejandro},
  journal={IEEE Transactions on Signal Processing},
  volume={64},
  number={19},
  pages={5158--5173},
  year={2016},
  publisher={IEEE}
}

@article{yue2025differentially,
  title={Differentially private linearized {ADMM} algorithm for decentralized nonconvex optimization},
  author={Yue, Xiao-Yu and Xiao, Jiang-Wen and Liu, Xiao-Kang and Wang, Yan-Wu},
  journal={IEEE Transactions on Information Forensics and Security},
  year={2025},
  publisher={IEEE}
}

@article{cao2018distributed,
  title={Distributed linearized {ADMM} for network cost minimization},
  author={Cao, Xuanyu and Liu, KJ Ray},
  journal={IEEE Transactions on Signal and Information Processing over Networks},
  volume={4},
  number={3},
  pages={626--638},
  year={2018},
  publisher={IEEE}
}

@article{ng2011inexact,
  title={Inexact alternating direction methods for image recovery},
  author={Ng, Michael K and Wang, Fan and Yuan, Xiaoming},
  journal={SIAM Journal on Scientific Computing},
  volume={33},
  number={4},
  pages={1643--1668},
  year={2011},
  publisher={SIAM}
}

@article{chen2017efficient,
  title={An efficient inexact symmetric {Gauss--Seidel} based majorized {ADMM} for high-dimensional convex composite conic programming},
  author={Chen, Liang and Sun, Defeng and Toh, Kim-Chuan},
  journal={Mathematical Programming},
  volume={161},
  number={1},
  pages={237--270},
  year={2017},
  publisher={Springer}
}

@article{he2020optimally,
  title={Optimally linearizing the alternating direction method of multipliers for convex programming},
  author={He, Bingsheng and Ma, Feng and Yuan, Xiaoming},
  journal={Computational Optimization and Applications},
  volume={75},
  number={2},
  pages={361--388},
  year={2020},
  publisher={Springer}
}

@article{zhou2023federated,
  title={Federated learning via inexact {ADMM}},
  author={Zhou, Shenglong and Li, Geoffrey Ye},
  journal={IEEE Transactions on Pattern Analysis and Machine Intelligence},
  volume={45},
  number={8},
  pages={9699--9708},
  year={2023},
  publisher={IEEE}
}

@article{rikos2022non,
  title={Non-oscillating quantized average consensus over dynamic directed topologies},
  author={Rikos, Apostolos I and Hadjicostis, Christoforos N and Johansson, Karl H},
  journal={Automatica},
  volume={146},
  pages={110621},
  year={2022},
  publisher={Elsevier}
}

@inproceedings{rikos2023asynchronous,
  title={Asynchronous distributed optimization via {ADMM} with efficient communication},
  author={Rikos, Apostolos I and Jiang, Wei and Charalambous, Themistoklis and Johansson, Karl H},
  booktitle={2023 62nd IEEE Conference on Decision and Control (CDC)},
  pages={7002--7008},
  year={2023},
  organization={IEEE}
}

@inproceedings{jiang2021distributed,
  title={Distributed alternating direction method of multipliers using finite-time exact ratio consensus in digraphs},
  author={Jiang, Wei and Charalambous, Themistoklis},
  booktitle={2021 European Control Conference (ECC)},
  pages={2205--2212},
  year={2021},
  organization={IEEE}
}

@misc{jang2026aliaadaptivelinearizedadmm,
      title={{ALiA}: Adaptive Linearized {ADMM}}, 
      author={Uijeong Jang and Kaizhao Sun and Wotao Yin and Ernest K Ryu},
      year={2026},
      eprint={2602.15000},
      archivePrefix={arXiv},
      primaryClass={math.OC},
      url={https://arxiv.org/abs/2602.15000}, 
}

@inproceedings{jiang2021asynchronous,
  title={An asynchronous approximate distributed alternating direction method of multipliers in digraphs},
  author={Jiang, Wei and Grammenos, Andreas and Kalyvianaki, Evangelia and Charalambous, Themistoklis},
  booktitle={2021 60th IEEE Conference on Decision and Control (CDC)},
  pages={3406--3413},
  year={2021},
  organization={IEEE}
}

@article{rikos2025distributed,
  title={Distributed optimization with efficient communication, event-triggered solution enhancement, and operation stopping},
  author={Rikos, Apostolos I and Jiang, Wei and Charalambous, Themistoklis and Johansson, Karl H},
  journal={arXiv preprint arXiv:2504.16477},
  year={2025}
}

@article{oliva2016distributed,
  title={Distributed finite-time calculation of node eccentricities, graph radius and graph diameter},
  author={Oliva, Gabriele and Setola, Roberto and Hadjicostis, Christoforos N},
  journal={Systems \& Control Letters},
  volume={92},
  pages={20--27},
  year={2016},
  publisher={Elsevier}
}

\end{document}